\DeclarePairedDelimiter{\floor} {\lfloor} {\rfloor}
\DeclareRobustCommand{\stirling}{\genfrac\{\}{0pt}{}}
\theoremstyle{plain}
\newtheorem{theorem}{Theorem}
\newtheorem{corollary}[theorem]{Corollary}
\newtheorem{lemma}[theorem]{Lemma}
\theoremstyle{definition}
\theoremstyle{remark}
\numberwithin{equation}{section}
\title{Unordered Factorizations with $k$ Parts}
\author{Jacob Sprittulla \\ sprittulla@alice-dsl.de}
\date{\today}
\begin{document}
	
\maketitle
	
\begin{abstract}
	We derive new formulas for the number of unordered (distinct) factorizations with $k$ parts of a positive integer $n$ as sums over the partitions of $k$ and an auxiliary function, the number of partitions of the prime exponents of $n$, where the parts have a specific number of colors. As a consequence, some new relations between partitions, Bell numbers and Stirling number of the second kind are derived. 
	
	We also derive a recursive formula for the number of unordered factorizations with $k$ different parts and a simple recursive formula for the number of partitions with $k$ different parts.
\end{abstract}

\section{Introduction and main results}
For integers $n \geq 2$, $k \geq 1$ and $l \geq 1$ we consider the following \textit{factorization counting functions}: 
\begin{itemize}
	\item $f(n)$ denotes the number of factorizations of $n$ with parts $\geq 2$, 
	\item $g(n)$ denotes the number of factorizations of $n$ with distinct parts $\geq 2$, 
	\item $f_k(n)$ denotes the number of factorizations of $n$ with exactly $k$ parts $\geq 2$, 
	\item $g_k(n)$ denotes the number of factorizations of $n$ with exactly $k$ distinct parts $\geq 2$, 
	\item $h_l(n)$ denotes the number of factorizations of $n$ with exactly $l$ different parts $\geq 2$, 
	\item $f_{k,l}(n)$ denotes the number of factorizations of $n$ with exactly $k$ parts $\geq 2$, where exactly $l$ parts are different,
	\item $F_k(n)$ denotes the number of factorizations of $n$ with exactly $k$ parts $\geq 1$, 
	\item $G_k(n)$ denotes the number of factorizations of $n$ with exactly $k$ distinct parts $\geq 1$.
\end{itemize}

We count unordered factorizations, where the order of parts is irrelevant.	As an example, we list all factorizations of $n=36=2^2 \cdot 3^2$.
\begin{align*}
f(36) & = 9 =  \# \{ (36),(18,2),(9,4),(12,3),(6,6),(9,2,2),(6,3,2),(3,3,4),(3,3,2,2) \}    \\
g(36) & = 5 = \# \{(36),(18,2),(9,4),(12,3),(6,3,2) \} \\
f_2(36) & =4 = \# \{(18,2),(9,4),(12,3),(6,6) \} \\
g_2(36) & =3 = \# \{(18,2),(9,4),(12,3) \} \\
h_2(36) & =6 = \# \{(18,2),(9,4),(12,3),(9,2,2)(3,3,4),(3,3,2,2) \} \\
f_{3,2}(36) &= 2 = \# \{(9,2,2),(3,3,4) \}
\text{,}
\end{align*}
where $\#$ denotes the number of elements of a set. 

It is easy to see that all the above functions are \textit{prime independent}, meaning that their value is completely determined by the prime signature of $n$. For example, we have $f(12)=f(75)=4$, since $12=2^2 \cdot 3$ has the same prime signature as $75=3 \cdot 5^2$. For this reason, these functions can be considered as multipartitions of the prime exponents of $n$ (where the order of the exponents is irrelevant) as in Andrews \cite[Chapter 12]{And76} and Cheema and Motzkin \cite{Che68}. We will denote the prime exponents of an integer $n=\prod_{i=1}^{\omega} \pi_i^{e_i}$ by $(e_1, \dots, e_{\omega})$, where $\pi_i$ are primes and $\omega=\omega(n)$ denotes the number of distinct prime factors of $n$. 

Recurrence relations for $f_k(n)$ and $g_k(n)$, involving the divisors of $n$, are given by 
\begin{align}
\label{eq:fkn1}
f_k(n) &=  \tfrac{1}{k}   
\sum_{\substack{ d^i \lvert n \\ d \geq 2  }}
f_{k-i}(n/d^i)\\
\label{eq:gkn1}
g_k(n) &=  \tfrac{1}{k}    
\sum_{\substack{ d^i \lvert n \\ d \geq 2  }}
(-1)^{i+1} g_{k-i}(n/d^i)
\text{,}
\end{align}
with boundary conditions	
\begin{align*}
f_{k}(1)=g_k(1) &= 
\begin{cases}
1, & \text{if $k=0$;}\\
0, & \text{otherwise,}
\end{cases} 	
\end{align*}
see Cheema and Motzkin \cite[Theorem 3.II]{Che68} or Subbarao \cite[Theorem 5.3]{Sub04} for \eqref{eq:fkn1} and Knopfmacher and Mays \cite[Equation 15]{Kno05} for \eqref{eq:gkn1}\footnote{More precisely, Knopfmacher and Mays \cite{Kno05} give a recursion for the number of \textit{ordered} distinct $k$-factorizations from which Equation \eqref{eq:gkn1} follows easily.}. Harris and Subbarao \cite[Equation 4]{Har91} give similar formulas for $f(n)$. 

We will use the following notation for partitions. The set of all partitions of an integer $k$ will be denoted by $\mathcal{P}_k$. For a partition $\alpha \in \mathcal{P}_k$, we denote by $\beta=\beta(\alpha)$ the vector of 
$\beta_i = \# \{ \alpha_j =i \}$, $i=1,\dots,a:=\max(\alpha_j)$. Notice that, by definition, we have $\sum_{i=1}^{a} i \beta_i = k$. In Section \ref{sec:factpart}, we will proof the following explicit formula, which allows to calculate the number of (distinct) $k$-factorizations, where parts equal to $1$ are allowed, as a sum over the partitions of $k$. 
\begin{theorem}
	\label{tm:IntroFGkn}
	Let $n \geq 2$ and $k \geq 1$. Then
	\begin{align}
	\label{eq:Fkn}
	F_k(n) &=  
	\sum_{\alpha \in \mathcal{P}_k}  h(\beta)  
	\prod_{j=1}^\omega \nu_\beta(e_j)  \\
	\label{eq:Gkn}
	G_k(n) &=  
	\sum_{\alpha \in \mathcal{P}_k}  h(\beta) (-1)^{\theta(\beta)} 
	\prod_{j=1}^\omega \nu_\beta(e_j),
	\end{align}
	with 
	\begin{align}
	\label{eq:hbeta}
	h(\beta) &= 
	\left(\prod_{i=1}^a i^{\beta_i} \beta_i! \right)^{-1}  \\
	\label{eq:tbeta}
	\theta(\beta) &= 
	\sum_{i=1}^a (1+i) \beta_i 
	\text{.}
	\end{align}
	Further, $\nu_{\beta}(1)=\beta_1$ and for $m \geq 2$ the following recursion holds 
	\begin{align}
	\label{eq:nubetar}
	\nu_{\beta}(m) &= 
	\frac{1}{m} \left( \gamma(m) + 
	\sum_{k=1}^{m-1} \gamma(k) \nu_{\beta}(m-k)
	\right) 
	\text{,} \quad \text{where}  \quad
	\gamma(m) = \sum_{d \lvert m} d\beta_d
	\text{.}
	\end{align}		 		
\end{theorem}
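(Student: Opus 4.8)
The plan is to realize $F_k(n)$ and $G_k(n)$ as counts of, respectively, size-$k$ multisets and size-$k$ subsets of the positive integers whose product is $n$, and then to turn the multiplicative constraint into an additive one by using one formal variable per prime. Write each positive integer $d$ through its exponent vector $\mathbf{c}(d)=(c_1(d),\dots,c_\omega(d))\in\mathbb{Z}_{\ge0}^{\omega}$, so that $d\mid n$ forces $\mathbf{c}(d)\le(e_1,\dots,e_\omega)$ componentwise; in particular only the divisors of $n$ can occur as parts, so all sums below are finite. With variables $x_1,\dots,x_\omega$ and the abbreviation $x^{\mathbf{c}}:=\prod_{j=1}^{\omega}x_j^{c_j}$, the part ``inventory'' is $A(x_1,\dots,x_\omega):=\sum_{d\ge1}x^{\mathbf{c}(d)}=\prod_{j=1}^{\omega}(1-x_j)^{-1}$, and $F_k(n)$ (resp.\ $G_k(n)$) is the coefficient of $u^{k}x_1^{e_1}\cdots x_\omega^{e_\omega}$ in $\prod_{d\ge1}\bigl(1-u\,x^{\mathbf{c}(d)}\bigr)^{-1}$ (resp.\ in $\prod_{d\ge1}\bigl(1+u\,x^{\mathbf{c}(d)}\bigr)$), the variable $u$ tracking the number of parts.

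Next I would apply the exponential formula. Taking the logarithm of the two products gives $\sum_{r\ge1}\frac{u^{r}}{r}A(x^{r})$ and $\sum_{r\ge1}\frac{(-1)^{r+1}u^{r}}{r}A(x^{r})$ respectively, where $A(x^{r})$ denotes $A$ with every $x_j$ replaced by $x_j^{r}$. Exponentiating and extracting $[u^{k}]$ expresses the result as a sum over the partitions of $k$ written in $\beta$-notation: one gets $\sum_{\beta}\bigl(\prod_i i^{\beta_i}\beta_i!\bigr)^{-1}\prod_{r}A(x^{r})^{\beta_r}$ in the multiset case and the same expression times the sign $\prod_{r}(-1)^{(r+1)\beta_r}=(-1)^{\theta(\beta)}$ in the subset case, the sum ranging over all $\beta$ with $\sum_i i\beta_i=k$. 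This already produces the weight $h(\beta)$ of \eqref{eq:hbeta} and the sign $(-1)^{\theta(\beta)}$ of \eqref{eq:tbeta}.

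It then remains to evaluate $\prod_{r}A(x^{r})^{\beta_r}$. Since $A(x^{r})=\prod_{j}(1-x_j^{r})^{-1}$, we may interchange the products to obtain $\prod_{r}A(x^{r})^{\beta_r}=\prod_{j=1}^{\omega}\prod_{r\ge1}(1-x_j^{r})^{-\beta_r}=\prod_{j=1}^{\omega}P_\beta(x_j)$, where $P_\beta(y):=\prod_{r\ge1}(1-y^{r})^{-\beta_r}$, so the coefficient of $x_1^{e_1}\cdots x_\omega^{e_\omega}$ factors as $\prod_{j=1}^{\omega}[y^{e_j}]P_\beta(y)$. Setting $\nu_\beta(m):=[y^{m}]P_\beta(y)$ --- equivalently, the number of partitions of $m$ in which a part of size $d$ may be given any one of $\beta_d$ colors --- this is exactly $\prod_{j=1}^{\omega}\nu_\beta(e_j)$, which finishes \eqref{eq:Fkn} and \eqref{eq:Gkn}. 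For the recursion, logarithmic differentiation of $P_\beta$ gives $y\,P_\beta'(y)/P_\beta(y)=\sum_{r\ge1}\beta_r r\,y^{r}/(1-y^{r})=\sum_{m\ge1}\gamma(m)y^{m}$ with $\gamma(m)=\sum_{d\mid m}d\beta_d$; multiplying through by $P_\beta(y)$ and comparing coefficients of $y^{m}$ yields $m\,\nu_\beta(m)=\sum_{\ell=1}^{m}\gamma(\ell)\nu_\beta(m-\ell)$, which is \eqref{eq:nubetar}, together with $\nu_\beta(1)=\gamma(1)=\beta_1$.

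I expect the main difficulty to be organizational rather than conceptual: one must handle the product over the infinitely many ``part kinds'' $d\ge1$ cleanly (noting that the specialization to the coefficient of $x_1^{e_1}\cdots x_\omega^{e_\omega}$ makes everything finite because only divisors of $n$ contribute), be precise about the exponential-formula bookkeeping, and verify the sign identity $\prod_{r}(-1)^{(r+1)\beta_r}=(-1)^{\theta(\beta)}$ in the distinct-parts case. A small separate check is that the recursion and initial value for $\nu_\beta$ really characterize $[y^{m}]P_\beta(y)$, which is precisely what the logarithmic-differentiation step delivers.
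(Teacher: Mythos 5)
Your proof is correct and follows essentially the same route as the paper: take the logarithm of the part-inventory product, apply the exponential formula to obtain the sum over partitions of $k$ with weight $h(\beta)$ and sign $(-1)^{\theta(\beta)}$, and then separate the prime exponents so that the remaining coefficient extraction factors into the Euler transform values $\nu_\beta(e_j)$. The only differences are cosmetic --- you work with one formal variable per prime where the paper uses the Dirichlet series $\prod_i \zeta(is)^{\beta_i}$ together with multiplicativity (the substitution $x_j=\pi_j^{-s}$ identifies the two), and you supply a logarithmic-differentiation proof of the recursion \eqref{eq:nubetar}, which the paper only cites from Sloane and Plouffe.
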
	

Since the function $F_k(n)$ and $G_k(n)$ are closely related to the functions $f(n)$, $f_k(n)$, $g(n)$ and $g_k(n)$, see Lemma \ref{lm:fFgG} below, the above theorem can also be used to calculate values of the latter functions. Properties of the auxiliary function $\nu_{\beta}(m)$, the number of partitions of $m$, where the part $i$ can have $\beta_i$ colors, are summarized in Lemma \ref{lm:munu} below. 

The proof of Theorem \ref{tm:IntroFGkn} exploits the multiplicity of $F_k(n)$ and $G_k(n)$, which is the key to separate the prime exponents of $n$ in the factorization counting functions. By evaluating these formulas at primorials, we get equations relating the Stirling numbers of the second kind and the Bell numbers to sums over partitions (Corollary \ref{co:pabe} below).

Recently, Fedorov \cite[Lemma 2]{Fed18} found a similar formula to  \eqref{eq:Fkn} for $F_k(n)$ as a sum over all \textit{compositions} of $k$. We will demonstrate at the end of Section \ref{sec:factpart} that his formula and \eqref{eq:Fkn} are convertible by proving an equation for the harmonic mean of the product of the partial sums of compositions (Lemma \ref{lm:hmc} below). 

Subbarao \cite[Section 4.2]{Sub04} gave a recursive formula for $f_{k,l}(n)$, but his Equation (4.19) contains a slight error. In Section \ref{sec:recfor}, we will give a corrected version, see Theorem \ref{tm:fkln} below, and deduce the following recursive equation for $h_l(n)$, which seems to be new.
\begin{theorem}
	\label{tm:Introhln}
	Let $n \geq 2$, $k \geq 1$ and $l \geq 1$. Then 
	\begin{align}
	h_{l}(n) \log n &= 
	\sum_{\substack{ d^i \lvert n \\ d \geq 2}} 
	\sum_{j=1}^{l} (-1)^{j+1} 
	\binom{i}{j} h_{l-j}(n/d^i)  \log d
	\text{,} 
	\end{align}	
	with boundary condition	
	\begin{align*}
	h_{l}(n) &= 
	\begin{cases}
	1, & \text{if $n=1$ and $l=0$;}\\
	0, & \text{otherwise.}
	\end{cases} 		
	\text{.}
	\end{align*}
\end{theorem}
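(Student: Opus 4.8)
The plan is to rewrite the constant factor $\log n$ as a sum over parts, thereby reducing the claimed recursion to a combinatorial identity in which a single fixed part value $d$ is tracked, and then to prove that identity by a short generating-function computation.

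For the reduction: since $\log n$ is constant, I can write $h_l(n)\log n=\sum_{\lambda}\log n$, the sum running over the factorizations $\lambda$ of $n$ into exactly $l$ distinct parts. Writing $m_d(\lambda)$ for the multiplicity of the part $d$ in $\lambda$ and $t(\lambda)$ for the number of distinct parts of $\lambda$, every such $\lambda$ satisfies $\log n=\sum_{d\geq 2}m_d(\lambda)\log d$ (a finite sum), so interchanging the two finite sums gives $h_l(n)\log n=\sum_{d\geq 2}(\log d)\,N_l(n,d)$, where $N_l(n,d):=\sum_{t(\lambda)=l}m_d(\lambda)$. Since the right-hand side of the theorem is $\sum_{d\geq 2}(\log d)\bigl[\sum_{i\geq 1,\,d^i\mid n}\sum_{j=1}^{l}(-1)^{j+1}\binom{i}{j}h_{l-j}(n/d^i)\bigr]$, it suffices to prove that for every $d\geq 2$ the bracketed quantity equals $N_l(n,d)$, i.e.
\begin{equation}
N_l(n,d)=\sum_{\substack{i\geq 1\\ d^i\mid n}}\ \sum_{j=1}^{l}(-1)^{j+1}\binom{i}{j}\,h_{l-j}(n/d^i).\tag{$\ast$}
\end{equation}

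Next I would package $(\ast)$ over all $l$ by generating functions. Call a factorization \emph{$d$-free} if none of its parts equals $d$, and for a positive integer $M$ put $\Phi_M(y):=\sum_\mu y^{t(\mu)}$, summed over the $d$-free factorizations $\mu$ of $M$, and $H_M(y):=\sum_\lambda y^{t(\lambda)}=\sum_{m\geq 0}h_m(M)\,y^m$, summed over all factorizations of $M$; both are polynomials, and by convention $\Phi_M=H_M=0$ when $M$ is not a positive integer. Splitting a factorization of $M$ according to the multiplicity $a\geq 0$ of the part $d$ --- and, when $a\geq 1$, deleting those $a$ copies, which replaces $M$ by $M/d^a$ and lowers $t$ by one --- gives
\[
H_M(y)=\Phi_M(y)+y\sum_{a\geq 1}\Phi_{M/d^a}(y),\qquad\qquad \sum_{m\geq 0}N_m(M,d)\,y^m=y\sum_{a\geq 1}a\,\Phi_{M/d^a}(y),
\]
the second identity arising by weighting the same split by $a=m_d$. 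On the other hand, $\sum_{j\geq 1}(-1)^{j+1}\binom{i}{j}y^j=1-(1-y)^i$ by the binomial theorem, so the generating function in $y$ of the right-hand side of $(\ast)$ is $\sum_{i\geq 1,\,d^i\mid n}\bigl(1-(1-y)^i\bigr)H_{n/d^i}(y)$.

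Finally I would substitute the first identity above (with $M=n/d^i$) into this expression, swap the order of the resulting double sum over $i$ and the shift $a$ so that $b:=i+a$ becomes the outer index, and evaluate the finite geometric sum $\sum_{i=1}^{b-1}\bigl(1-(1-y)^i\bigr)=(b-1)-\bigl((1-y)-(1-y)^b\bigr)/y$. After this the coefficient of each $\Phi_{n/d^b}(y)$ simplifies --- the $(1-y)^b$ contributions cancelling --- to $by$, so the whole expression collapses to $y\sum_{b\geq 1}b\,\Phi_{n/d^b}(y)=\sum_{m\geq 0}N_m(n,d)\,y^m$; reading off the coefficient of $y^l$ yields $(\ast)$, and hence the recursion for $n\geq 2$. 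For $n=1$ both sides of the asserted equation vanish, since $\log 1=0$ and the divisor sum is empty, which is the stated boundary condition. The main obstacle is this last step --- keeping the reindexing of the double sum straight and verifying that the $(1-y)^{\bullet}$ terms really cancel --- but there are no convergence issues, as $d^a\mid n$ forces $a\leq\log_d n$ and every series occurring is a polynomial. (Alternatively, $(\ast)$ multiplied by $\log d$ and summed over $d$ also follows by summing the recursion for $f_{k,l}(n)$ of Theorem~\ref{tm:fkln} over all $k$ and using $h_l(n)=\sum_{k\geq l}f_{k,l}(n)$.)
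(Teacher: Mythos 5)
Your proof is correct, but it takes a genuinely different route from the paper. The paper obtains \eqref{eq:hln} as a byproduct of the recursion \eqref{eq:fkln} for $f_{k,l}(n)$, which it derives by computing the logarithmic derivative $\tfrac{\partial}{\partial s}\log\mathcal{K}(s,z,t)$ of the two-variable Dirichlet generating function and then specializing $z=1$ (your closing parenthetical --- summing \eqref{eq:fkln} over $k$ and using $h_l(n)=\sum_k f_{k,l}(n)$ --- is exactly that specialization in coefficient form). You instead prove the identity directly and combinatorially: the opening move of writing $\log n=\sum_d m_d(\lambda)\log d$ is the elementary counterpart of the logarithmic-derivative trick, but after that you fix a single part value $d$, condition on its multiplicity $a$, and verify a purely polynomial identity in the variable $y$ tracking the number of different parts; the telescoping of the $(1-y)^{\bullet}$ terms to the coefficient $by$ checks out, as does the decomposition $H_M=\Phi_M+y\sum_{a\geq 1}\Phi_{M/d^a}$ (deleting all copies of $d$ lowers the number of different parts by exactly one). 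What your approach buys is independence from Dirichlet series, infinite products and convergence considerations --- everything in sight is a finite sum of polynomials --- and it isolates the combinatorial content of the alternating binomial sum ($1-(1-y)^i$) rather than having it emerge from expanding $\log(1-n^{-s}z(1-t))$; what the paper's approach buys is that the same computation simultaneously yields the finer recursion \eqref{eq:fkln} for $f_{k,l}(n)$, which your argument does not produce without reinstating a second variable to track the total number of parts. One cosmetic remark: early on you describe the factorizations counted by $h_l(n)$ as having ``exactly $l$ distinct parts,'' which in the paper's terminology would mean $g_l(n)$; your subsequent use of $t(\lambda)$ makes clear you mean exactly $l$ \emph{different} part values, which is the correct reading.
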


From this theorem, we deduce equations for $r_l(n)$, the number of partitions with $k$ different parts (Corollary \ref{co:pkln}).

\section{A formula for unordered $k$-factorizations based on partitions}
\label{sec:factpart}
We need some preparations for the proof of Theorem \ref{tm:IntroFGkn}. With the notation for partitions introduced in the previous section, we can restate the exponential formula of Stanley \cite[Corrolary 5.1.6]{Sta01} as  
\begin{align}
\label{eq:ExpFor}
\exp \left( \sum_{k=1}^{\infty} c_k x^k \right)  = 
\sum_{k=0}^{\infty} x^k
\sum_{\alpha \in P_k} \prod_{i=1}^{a} 
\frac{c_i^{\beta_i}}{\beta_i!}
\text{,}
\end{align}
for real $c_k$ and $\lvert x \rvert <1$.

We will subsequently make use of the Dirichlet generating functions (dgf's), given by
\begin{align}
\label{eq:fkdgf}
1+\sum_{n=2}^\infty \sum_{k=1}^\infty 
f_{k}(n) n^{-s} z^k
&=\prod_{n=2}^\infty 
\left( 1 - z n^{-s}  \right)^{-1}   \\
\label{eq:gkdgf}
1+\sum_{n=2}^\infty \sum_{k=1}^\infty \sum_{l=1}^k
g_{k}(n) n^{-s} z^k  		
&=\prod_{n=2}^\infty 
\left( 1 + z n^{-s}  \right)	\\
\label{eq:fkldgf}
1+\sum_{n=2}^\infty \sum_{k=1}^\infty \sum_{l=1}^k
f_{k,l}(n) n^{-s} z^k t^l 
&=\prod_{n=2}^\infty 
\left( 1 + \frac{z t n^{-s} }{1 - z n^{-s} } \right) \\
\label{eq:hldgf}
1+\sum_{n=2}^\infty \sum_{k=1}^\infty \sum_{l=1}^k
h_{l}(n) n^{-s} t^l  
&=\prod_{n=2}^\infty 
\left( 1 + \frac{t}{n^{s}-1} \right) 
\text{,}
\end{align}
see Hensley \cite[Equation 1.4]{Hen87} and Subbarao \cite[Equation 2.4]{Sub04}. 

If we change the set of admissible parts of the factorization counting functions from the set of integers $n \geq 2$ to any non-empty subset 
$A \subseteq \mathbb{N}$, we have to replace the range of the products
on the right hand sides (rhs's) of the equations \eqref{eq:fkdgf}, \eqref{eq:gkdgf} and \eqref{eq:fkldgf} accordingly. We will utilize this  in the proof of the next theorem.

We will need the dgf's of the functions $F_k(n)$ and $G_k(n)$ for specific values of $k$. Knopfmacher and Mays \cite[Theorem 1]{Kno05} gave a dgf for $f_k(n)$. Their approach can easily be generalized to the case of admissible parts $\geq 1$ and to the case of factorizations with distinct parts. 

\begin{theorem}
	\label{tm:FkA}
	Let $k \geq 1$. Then
	\begin{align}
	\label{eq:Fkdgf}
	\mathcal{F}_{k}(s)  & := 1+
	\sum_{n =2}^{\infty}  
	F_{k}(n) n^{-s}  =
	\sum_{\alpha \in \mathcal{P}_k}  h(\beta)  
	\prod_{i=1}^{a} \zeta(is)^{\beta_i}  \\ 
	\label{eq:Gkdgf}
	\mathcal{G}_{k}(s)  & := 1+
	\sum_{n =2}^{\infty}  
	G_{k}(n) n^{-s}   =
	\sum_{\alpha \in \mathcal{P}_k}   h(\beta) (-1)^{\theta(\beta)} 
	\prod_{i=1}^{a} \zeta(is)^{\beta_i},  
	\end{align}
	with $h(\beta)$ and $\theta(\beta)$ given by \eqref{eq:hbeta} and \eqref{eq:tbeta}.
\end{theorem}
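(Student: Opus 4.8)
The plan is to pass through a single two-variable Dirichlet generating function encoding all the $F_k(n)$ (resp.\ $G_k(n)$) at once, read off the coefficient of $z^k$, and unfold the resulting infinite product with the exponential formula \eqref{eq:ExpFor}.

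First I would enlarge the set of admissible factor parts from the integers $\geq 2$ to the integers $\geq 1$, as announced just before the theorem. Grouping a factorization by the multiplicity of each part $n\geq 1$ — an arbitrary multiplicity in the unrestricted case, multiplicity $0$ or $1$ in the distinct case — gives, exactly as in Hensley \cite{Hen87} and Subbarao \cite{Sub04} but now with the factor for $n=1$ present,
\[
\sum_{k\geq 0}\mathcal{F}_k(s)\,z^k=\prod_{n=1}^{\infty}\bigl(1-zn^{-s}\bigr)^{-1},
\qquad
\sum_{k\geq 0}\mathcal{G}_k(s)\,z^k=\prod_{n=1}^{\infty}\bigl(1+zn^{-s}\bigr),
\]
the coefficient of $z^k$ on the left being $\mathcal{F}_k(s)$, respectively $\mathcal{G}_k(s)$, up to the $n=1$ contribution to the constant term discussed below. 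One may read these as identities of formal power series in $z$ over the ring of formal Dirichlet series; I would instead fix a real $s>1$ and $\lvert z\rvert<1$, where both products converge absolutely and locally uniformly in $z$, so that the real-coefficient form of \eqref{eq:ExpFor} applies verbatim.

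Then I take logarithms. Since $\sum_{n\geq 1}\sum_{j\geq 1}\lvert z\rvert^{j}n^{-js}/j<\infty$ for such $s$ and $z$, the order of summation may be interchanged, and
\[
\log\prod_{n=1}^{\infty}\bigl(1-zn^{-s}\bigr)^{-1}=\sum_{j=1}^{\infty}\frac{z^{j}}{j}\sum_{n=1}^{\infty}n^{-js}=\sum_{j=1}^{\infty}\frac{\zeta(js)}{j}\,z^{j},
\qquad
\log\prod_{n=1}^{\infty}\bigl(1+zn^{-s}\bigr)=\sum_{j=1}^{\infty}(-1)^{j+1}\frac{\zeta(js)}{j}\,z^{j}.
\]
Applying \eqref{eq:ExpFor} with $x=z$ and with $c_i=\zeta(is)/i$ in the first case, $c_i=(-1)^{i+1}\zeta(is)/i$ in the second (both real for real $s$), and comparing coefficients of $z^k$, one obtains
\[
\mathcal{F}_k(s)=\sum_{\alpha\in\mathcal{P}_k}\prod_{i=1}^{a}\frac{1}{\beta_i!}\Bigl(\frac{\zeta(is)}{i}\Bigr)^{\beta_i},
\qquad
\mathcal{G}_k(s)=\sum_{\alpha\in\mathcal{P}_k}\prod_{i=1}^{a}\frac{1}{\beta_i!}\Bigl(\frac{(-1)^{i+1}\zeta(is)}{i}\Bigr)^{\beta_i}.
\]
It then remains only to collect factors: $\prod_{i=1}^{a}\bigl(i^{\beta_i}\beta_i!\bigr)^{-1}=h(\beta)$ by \eqref{eq:hbeta}, and $\prod_{i=1}^{a}\bigl((-1)^{i+1}\bigr)^{\beta_i}=(-1)^{\sum_i(1+i)\beta_i}=(-1)^{\theta(\beta)}$ by \eqref{eq:tbeta}, which turns the two displays into \eqref{eq:Fkdgf} and \eqref{eq:Gkdgf}. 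Established this way for real $s>1$, the two identities persist for $\Re(s)>1$ by analytic continuation (both sides being holomorphic there), and in fact already hold as identities of formal Dirichlet series.

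The only genuine obstacle is rigour rather than ideas: one must justify the absolute convergence licensing the rearrangement $\sum_{n}\sum_{j}=\sum_{j}\sum_{n}$ inside the logarithm and the term-by-term use of the exponential formula, and one should check that adjoining the part $1$ supplies exactly the constant ($n=1$) term demanded by the definitions of $\mathcal{F}_k$ and $\mathcal{G}_k$ — for the distinct-parts functions the $n=1$ factor is $1+z$, so this term equals $1$ only for $k\leq 1$ and vanishes for $k\geq 2$. Everything else — the product formulas with the part set enlarged, and the final bookkeeping producing $h(\beta)$ and the sign $(-1)^{\theta(\beta)}$ — is a routine adaptation of the approach of Knopfmacher and Mays \cite{Kno05} recalled before the theorem.
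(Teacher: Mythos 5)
Your argument is the same as the paper's: take logarithms of the Euler-type product over all parts $n\geq 1$, expand $\log(1\pm zn^{-s})$, sum over $n$ to produce $\zeta(js)$, apply the exponential formula \eqref{eq:ExpFor} with $c_i=\pm\zeta(is)/i$, and collect the factors into $h(\beta)$ and $(-1)^{\theta(\beta)}$ --- the paper does exactly this, with the convergence justifications left implicit. The caveat you raise about the $n=1$ factor $1+z$ is a genuine observation: for $k\geq 2$ the constant term of the right-hand side of \eqref{eq:Gkdgf} is $0$ rather than the stated $1$ (since $G_k(1)=0$ there), though this affects neither the coefficients of $n^{-s}$ for $n\geq 2$ nor any of the later results.
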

\begin{proof}
	We proof \eqref{eq:Gkdgf}. We use \eqref{eq:gkdgf}, 
	$\log(1+x) = \sum_{k=1}^{\infty} \tfrac{1}{k} (-1)^{k+1} x^k$ 
	(for $\lvert x \rvert <1$) and \eqref{eq:ExpFor} to get the following expression for the (joint) dgf of $G_k(n)$ 
	\begin{align*}
	\prod_{n=1}^\infty 
	\left( 1 + z n^{-s}  \right)  
	&= \exp \left( \sum_{n=1}^{\infty}
	\log(1 + t n^{-s})   \right)  \\ 
	&= \exp \left( \sum_{n=1}^{\infty}  \sum_{k=1}^{\infty}
	\tfrac{1}{k} (-1)^{k+1}  t^k n^{-ks} \right)  \\ 
	&= \exp \left( \sum_{k=1}^{\infty} 
	\tfrac{1}{k} (-1)^{k+1}  t^k \zeta(ks) \right) \\
	&= \sum_{k=0}^{\infty} \sum_{\alpha \in P_k} 
	\prod_{i=1}^{l} \frac{c_i^{\beta_i}}{\beta_i!}
	\text{,}
	\end{align*}
	with $c_k:=\tfrac{1}{k} (-1)^{k+1} \zeta(ks)$. Extracting the $k$-th coefficient we find 
	\begin{align*}
	\mathcal{G}_{k}(s) 
	&= \sum_{\alpha \in P_k} \prod_{i=1}^{l} 
	\frac{c_i^{\beta_i}}{\beta_i!} 	  \\
	&= \sum_{\alpha \in P_k} 
	\prod_{i=1}^a
	\frac{(-1)^{(i+1)\beta_i}}{i^{\beta_i} \beta_i!}
	\zeta(is)^{\beta_i} 
	\text{,}
	\end{align*}
	and the claim follows. \\
	The proof of \eqref{eq:Fkdgf} is similar, by using 
	$\log(1-x) = -\sum_{k=1}^{\infty} \tfrac{1}{k} x^k$.
\end{proof}

If we replace $\zeta(s)$ by 
$\zeta_A(s):=\sum_{n \in A} n^{-s}$ in \eqref{eq:Fkdgf} and \eqref{eq:Gkdgf}, for any non-empty subset $A \subseteq \mathbb{N}$, we get the dgf's of the corresponding factorizations counting functions, where the admissible parts are restricted to $A$.

In the next lemma, we list some properties of the function $\mu_{\beta}(n)$, defined by the dgf 
\begin{align}
\label{eq:mudgf}
1+\sum_{n=1}^{\infty} \mu_{\beta}(n) n^{-s} = 
\prod_{i=1}^a \zeta(is)^{\beta_i}
\text{,} 
\end{align}
for a given $\beta$ of length $a$, that appears in \eqref{eq:Fkdgf} and \eqref{eq:Gkdgf}.

\begin{lemma}
	\label{lm:munu}
	Let $n=\prod_{j=1}^{\omega} \pi_j^{e_j}$ and a vector $\beta$ of length $a$ be given. Then
	\begin{align}
	\label{eq:munu}
	\mu_{\beta}(n) = \prod_{j=1}^{\omega} \nu_{\beta}(e_j)
	\text{,} 
	\end{align}
	where
	\begin{align}
	\label{eq:nubeta}
	\nu_{\beta}(m) := \mu_{\beta}(\pi^m)
	\end{align}
	for some prime $\pi$. The ordinary generating function (ogf) of $\nu_{\beta}(m)$ is given by
	\begin{align}
	\label{eq:nuogf}
	1+\sum_{m=1}^{\infty} \nu_{\beta}(m) x^m = 
	\prod_{i=1}^{a} (1-x^i)^{-\beta_i}
	\text{.} 
	\end{align}
	Further, $\nu_{\beta}(1)=\beta_1$ and for $m \geq 2$ the following recursive relation holds 
	\begin{align}
	\label{eq:nubetar2}
	\nu_{\beta}(m) &= 
	\frac{1}{m} \left( \gamma(m) + 
	\sum_{k=1}^{m-1} \gamma(k) \nu_{\beta}(m-k)
	\right)
	\text{,} \quad \text{where}  \quad
	\gamma(m) = \sum_{d \lvert m} d\beta_d
	\text{.}
	\end{align}		 
	
\end{lemma}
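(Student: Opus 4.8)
The plan is to establish the four assertions of Lemma \ref{lm:munu} in sequence, since each builds on the previous one. First I would prove the multiplicativity relation \eqref{eq:munu}. The point is that $\mu_\beta(n)$ is defined as a Dirichlet convolution of functions of the form $n\mapsto[\![n=m^i]\!]$ (each factor $\zeta(is)^{\beta_i}$ being the $\beta_i$-fold Dirichlet product of $\zeta(is)$, and $\zeta(is)=\sum_m (m^i)^{-s}$ is the dgf of the multiplicative indicator of perfect $i$-th powers). A finite Dirichlet product of multiplicative functions is multiplicative, so $\mu_\beta$ is multiplicative; hence $\mu_\beta(n)=\prod_j \mu_\beta(\pi_j^{e_j})=\prod_j \nu_\beta(e_j)$ by the definition \eqref{eq:nubeta}. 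One should note the coefficients are nonnegative and finitely supported on each $n$, so all the formal Dirichlet-series manipulations are legitimate.

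Next I would derive the ogf \eqref{eq:nuogf}. Restricting the dgf \eqref{eq:mudgf} to powers of a single prime $\pi$, i.e.\ substituting $x=\pi^{-s}$ and collecting the coefficient of $\pi^{-ms}$, turns the Euler-type identity $\zeta(is)=\sum_{m\geq0}\pi^{-ims}=(1-\pi^{-is})^{-1}$ into $1+\sum_{m\geq1}\nu_\beta(m)x^m=\prod_{i=1}^a(1-x^i)^{-\beta_i}$. From this, reading off the coefficient of $x^1$ gives $\nu_\beta(1)=\beta_1$ immediately (only the $i=1$ factor contributes a linear term).

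For the recursion \eqref{eq:nubetar2}, I would use the standard logarithmic-derivative trick. Writing $N(x):=\prod_{i=1}^a(1-x^i)^{-\beta_i}$, take logs to get $\log N(x)=-\sum_i\beta_i\log(1-x^i)=\sum_i\beta_i\sum_{r\geq1}\tfrac{1}{r}x^{ir}$, differentiate, and multiply by $x$ to obtain $x N'(x)/N(x)=\sum_i\beta_i\sum_{r\geq1} i\,x^{ir}=\sum_{m\geq1}\gamma(m)x^m$ with $\gamma(m)=\sum_{d\mid m}d\beta_d$ (grouping by $m=ir$, the contribution is $\sum_{i\mid m} i\beta_i$). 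Then $xN'(x)=N(x)\cdot\sum_m\gamma(m)x^m$; equating coefficients of $x^m$ on both sides, the left side gives $m\nu_\beta(m)$ and the right side gives $\gamma(m)+\sum_{k=1}^{m-1}\gamma(k)\nu_\beta(m-k)$ (using $\nu_\beta(0)=1$), which is exactly \eqref{eq:nubetar2} after dividing by $m$.

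The only mild obstacle is bookkeeping rather than conceptual: one must be careful that the product $\prod_{i=1}^a(1-x^i)^{-\beta_i}$ is a genuine power series (each $\beta_i\geq0$), that the regrouping $m=ir$ in the $\gamma$ computation correctly accounts for every divisor pair, and that the edge cases ($\beta_1=0$, or $a<m$) are consistent with the formulas. None of these requires real work, so I would present the multiplicativity argument and the logarithmic-derivative computation in full and leave the coefficient extractions as routine.
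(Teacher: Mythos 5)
Your proposal is correct and follows essentially the same route as the paper: multiplicativity of $\mu_\beta$ from the structure of its dgf as a Dirichlet product of multiplicative functions, and the ogf \eqref{eq:nuogf} by restricting the Euler product to a single prime and substituting $x=\pi^{-s}$. The only difference is that you prove the recursion \eqref{eq:nubetar2} in full via the logarithmic-derivative computation, whereas the paper simply cites Sloane and Plouffe for the Euler transform; your derivation is the standard one and is correct.
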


\begin{proof}
	First notice that for $i \geq 1$, the $i$-th divisor function $d_i(n)$, which counts the number of ordered factorizations of $n$ with $i$ parts $\geq 1$, has dgf $\zeta(is)$ and is multiplicative and prime independent. Therefore, for every $\beta$, $\mu_{\beta}(n)$ is also multiplicative, by the structure of its dgf. This shows \eqref{eq:munu}.
	
	Form the Euler product of the Riemann zeta function, we get
	\begin{align*}
	1+\sum_{n=1}^{\infty} \mu_{\beta}(n) n^{-s} = 
	\prod_{i=1}^a \prod_{\pi} (1-\pi^{-is})^{-\beta_i}
	\text{.}
	\end{align*}
	
	Restricting both sides of this equation to a single prime $\pi$, using \eqref{eq:nubeta} and substituting $x=\pi^{-s}$, we get \eqref{eq:nuogf}. 
	
	Sloane and Plouffe \cite[p.\ 20]{Slo95} call the series $(\nu_{\beta}(m))_{m \geq 1}$ the Euler transform of $\beta$ and give the recursion \eqref{eq:nubetar2}.		
\end{proof}

It follows from \eqref{eq:nuogf} that $\nu_\beta(m)$ can be interpreted as the number of partitions of $m$ where the part $1$ can appear in $\beta_1$ colors, the part $2$ can appear in $\beta_2$ colors, and so on. For example, we have
\begin{align*}
\nu_{(0,2,1)}(7) = 3 = \# \{ (2_a,2_a,3),(2_a,2_b,3),(2_b,2_b,3) \}
\text{,}
\end{align*}
where the subscripts are indicating the different colors.

Theorem \ref{tm:IntroFGkn} now follows directly from Theorem \ref{tm:FkA} and Lemma \ref{lm:munu}.

Notice that in the formulas of Theorem \ref{tm:IntroFGkn}, the number of addends, and hence the computational effort, grows with $k$ via 
$p(k) \approx \frac{1}{4 k \sqrt{3}} \exp(\pi \sqrt{2k/3})$, by the Hardy-Ramanujan formula. The formulas can be used to derive general equations for specific values of $k$, as in Cheema and Motzkin \cite[Theorem 7.I]{Che68}.

The next lemma covers the relation between the functions $F_k(n)$, $G_k(n)$ and the functions $f_k(n)$, $g_k(n)$, $f(n)$ and $g(n)$. We denote by $\Omega=\Omega(n)$ the number of prime factors of $n$, counted with multiplicity.

\begin{lemma}
	\label{lm:fFgG}
	Let $n \geq 2$ and $k \geq 1$. Then
	\begin{align}
	\label{eq:fFkn}
	f_k(n)  &= F_{k}(n) - F_{k-1}(n)  \\
	\label{eq:gGkn}
	g_k(n)  &= \sum_{i=1}^k (-1)^{k-i} G_i(n) \\
	\label{eq:FOm}
	f(n)    &= F_{\Omega}(n)   \\
	g(n)    &= \sum_{i=0}^{\floor{(\Omega-1)/2}} G_{\Omega-2i}(n) 
	\text{.}
	\end{align}	
\end{lemma}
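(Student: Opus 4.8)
The plan is to reduce everything to the two ``peeling off a unit factor'' identities
\[
F_k(n) = f_k(n) + F_{k-1}(n), \qquad G_k(n) = g_k(n) + g_{k-1}(n), \qquad (k \ge 1,\ n \ge 2),
\]
after which all four formulas follow by elementary summation. To prove the first identity I would sort the factorizations of $n$ into exactly $k$ parts $\ge 1$ according to whether a part equal to $1$ occurs: those with no part equal to $1$ are exactly the factorizations counted by $f_k(n)$, while deleting one part equal to $1$ gives a bijection between those that do contain a $1$ and the factorizations of $n$ into exactly $k-1$ parts $\ge 1$, whose number is $F_{k-1}(n)$; the inverse map adjoins a part equal to $1$, and both maps are well defined on multisets. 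Here one uses the convention $F_0(n)=0$ for $n \ge 2$, since $n$ admits no empty factorization. The second identity is proved the same way, with the additional remark that a factorization with \emph{distinct} parts contains at most one part equal to $1$, so removing it leaves a factorization of $n$ into exactly $k-1$ distinct parts $\ge 2$. (Alternatively, both identities can be read off the generating functions \eqref{eq:fkdgf}--\eqref{eq:gkdgf}: enlarging the set of admissible parts to include $n=1$ multiplies the right-hand sides by the extra factor $(1-z)^{-1}$, resp.\ $(1+z)$, and comparing coefficients of $z^k$ gives the claims.)

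Given these, \eqref{eq:fFkn} is immediate. For \eqref{eq:gGkn} I would solve the recursion $g_k(n) = G_k(n) - g_{k-1}(n)$ downward, using $g_0(n)=0$, to obtain $g_k(n) = \sum_{i=1}^k (-1)^{k-i} G_i(n)$. For \eqref{eq:FOm} I would first note that every factorization of $n$ with parts $\ge 2$ has at most $\Omega=\Omega(n)$ parts, because each part contributes at least $1$ to $\Omega(n)$; hence $f(n) = \sum_{k=1}^{\Omega} f_k(n)$, and by \eqref{eq:fFkn} the sum telescopes, $\sum_{k=1}^{\Omega}\bigl(F_k(n)-F_{k-1}(n)\bigr) = F_{\Omega}(n) - F_0(n) = F_{\Omega}(n)$.

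For the last identity, again $g(n) = \sum_{k=1}^{\Omega} g_k(n)$, since a factorization of $n$ into $k$ distinct parts $\ge 2$ has at least $k$ parts counted with multiplicity, forcing $k \le \Omega$. Substituting \eqref{eq:gGkn} and interchanging the order of summation gives
\[
g(n) = \sum_{j=1}^{\Omega} G_j(n) \sum_{k=j}^{\Omega} (-1)^{k-j},
\]
where the inner sum equals $1$ when $\Omega-j$ is even and $0$ otherwise. Writing $j = \Omega - 2i$ and checking that $1 \le j \le \Omega$ is equivalent to $0 \le i \le \lfloor(\Omega-1)/2\rfloor$ yields the stated formula. (One could instead verify it directly by expanding $\sum_{i}\bigl(g_{\Omega-2i}(n)+g_{\Omega-2i-1}(n)\bigr)$ and observing that the two subsums jointly enumerate $g_1(n),\dots,g_{\Omega}(n)$.)

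I do not expect a genuine obstacle here: the mathematical content is the single ``adjoin/delete a $1$'' bijection. The points requiring care are the boundary conventions $F_0(n)=g_0(n)=0$ for $n \ge 2$, the justification that the counting functions vanish beyond $\Omega$ parts (so the truncated sums over $k \le \Omega$ genuinely equal $f(n)$ and $g(n)$), and the index bookkeeping in the parity argument for the final formula, which is the likeliest place for a slip.
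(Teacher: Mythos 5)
Your proposal is correct and follows essentially the same route as the paper: both rest on the ``pad with / delete a part equal to $1$'' observation, which gives $G_k(n)=g_k(n)+g_{k-1}(n)$ and (in equivalent form) $F_k(n)=\sum_{i=1}^k f_i(n)$, and then obtain the four identities by solving these relations and summing over $k\le\Omega$. You merely spell out the bijection and the boundary conventions that the paper leaves implicit.
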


\begin{proof}
	By viewing all (distinct) $k$-factorizations, where some parts equal $1$, we can directly conclude that $F_k(n)$ and $G_k(n)$ are related to $f_k(n)$ and $g_k(n)$ by 
	\begin{align}
	F_k(n)  &= \sum_{i=1}^k f_i(n) \\
	\label{eq:Ggkn}
	G_k(n)  &= g_{k}(n) + g_{k-1}(n) 
	\text{,}
	\end{align}	
	for all $n \geq 2$ and $k \geq 1$. Solving these equations for $f_k(n)$ and $g_k(n)$ yields \eqref{eq:fFkn} and \eqref{eq:gGkn}.
	
	The equations for $f(n)$ and $g(n)$ are following from $f(n)=\sum_{k=1}^{\Omega} f_k(n)$ 
	and 
	$g(n)=\sum_{k=1}^{\Omega} g_k(n)$.
\end{proof}

For example, from \eqref{eq:Fkn} and \eqref{eq:FOm} we can conclude
\begin{align}
\label{eq:cor11}
f(n) &=
\sum_{\alpha \in \mathcal{P}_\Omega}  h(\beta) 
\prod_{i=1}^\omega \nu_\beta(e_i) 	
\text{.}
\end{align}	
Notice that $\Omega$ may be replaced by any $m \geq \Omega$ in this equation. 

It is well known that, evaluated at primorials $P_n = 2 \cdot 3 \cdots \pi_n$, where $\pi_n$ denotes the $n$-th prime, the factorization counting functions are related to the Stirling number of the second kind, denoted by $\stirling{n}{k}$, and the Bell numbers $B_n$ via  
\begin{align}
\label{eq:fkPn}
f_k(P_n) &= g_k(P_n) = \stirling{n}{k}  \\
\label{eq:fPn}
f(P_n)   &= g(P_n) = B_n 
\text{.}
\end{align}

Evaluating the formulas of Theorem \ref{tm:IntroFGkn} for the $n$-th primorial 
$P_n$, we can derive some interesting identities between partitions and the Stirling and Bell numbers, which we believe are new. Similar formulas for the Bernouilli and the Euler numbers have been found by Vella \cite[Theorem 11]{Vel08}. Recall that 
$\beta_1 = \# \{ \alpha_j =1 \}$.

\begin{corollary}
	\label{co:pabe}
	Let $n \geq 1$, $1 \leq k \leq n$. Let $h(\beta)$ and $\theta(\beta)$ be as in Theorem  \ref{tm:IntroFGkn}. Then 
	\begin{align}
	\label{eq:cor12}
	\sum_{i=1}^k \stirling{n}{i}	&=
	\sum_{\alpha \in \mathcal{P}_k}  h(\beta) \beta_1^n \\
	\label{eq:cor13}
	B_n &= 
	\sum_{\alpha \in \mathcal{P}_n}  h(\beta) \beta_1^n  \\
	\label{eq:cor14}
	\stirling{n}{k} + \stirling{n}{k-1} &=
	\sum_{\alpha \in \mathcal{P}_k}  
	(-1)^{\theta(\beta)} h(\beta) \beta_1^n  \\
	\label{eq:cor15}
	\binom{n}{2} + 1 &=
	\sum_{\alpha \in \mathcal{P}_n}  
	(-1)^{\theta(\beta)} h(\beta) \beta_1^n   
	\text{.}
	\end{align}
\end{corollary}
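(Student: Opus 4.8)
The plan is to evaluate the two identities of Theorem~\ref{tm:IntroFGkn} at $n=P_n$, the $n$-th primorial. The decisive simplification is that $P_n=2\cdot 3\cdots\pi_n$ has prime signature $(e_1,\dots,e_\omega)=(1,\dots,1)$ with $\omega(P_n)=n$, so every factor $\nu_{\beta}(e_j)$ equals $\nu_{\beta}(1)=\beta_1$ and the product $\prod_{j=1}^{\omega}\nu_{\beta}(e_j)$ collapses to the single power $\beta_1^{n}$. Substituting this into \eqref{eq:Fkn} and \eqref{eq:Gkn} gives at once
\begin{align*}
F_k(P_n)=\sum_{\alpha\in\mathcal{P}_k} h(\beta)\,\beta_1^{n}, \qquad G_k(P_n)=\sum_{\alpha\in\mathcal{P}_k} (-1)^{\theta(\beta)}\,h(\beta)\,\beta_1^{n},
\end{align*}
after which it only remains to rewrite the left-hand sides in terms of Stirling and Bell numbers.

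For \eqref{eq:cor12} I would combine the first of these with the relation $F_k(n)=\sum_{i=1}^{k}f_i(n)$ obtained in the proof of Lemma~\ref{lm:fFgG} and with $f_i(P_n)=\stirling{n}{i}$ from \eqref{eq:fkPn}; the left-hand side then becomes $\sum_{i=1}^{k}\stirling{n}{i}$, which is the asserted identity. Specialising to $k=n$ and using $\sum_{i=1}^{n}\stirling{n}{i}=B_n$ yields \eqref{eq:cor13}; alternatively one obtains \eqref{eq:cor13} directly from $f(P_n)=F_{\Omega}(P_n)=F_n(P_n)$ via \eqref{eq:FOm} and $f(P_n)=B_n$ via \eqref{eq:fPn}.

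For \eqref{eq:cor14} I would use instead the relation $G_k(n)=g_k(n)+g_{k-1}(n)$, that is, \eqref{eq:Ggkn}, together with $g_i(P_n)=\stirling{n}{i}$ from \eqref{eq:fkPn}, so that the left-hand side of the $G$-formula becomes $\stirling{n}{k}+\stirling{n}{k-1}$. Finally \eqref{eq:cor15} is just the case $k=n$ of \eqref{eq:cor14}, using the standard evaluations $\stirling{n}{n}=1$ and $\stirling{n}{n-1}=\binom{n}{2}$. I expect no genuine obstacle in this argument: the one thing that really has to be observed is the signature collapse in the first step --- the product over prime exponents in Theorem~\ref{tm:IntroFGkn} degenerates to $\beta_1^{n}$ precisely because every exponent of a primorial equals $1$ --- after which the corollary is pure bookkeeping with the already-established dictionary relating $F_k$, $G_k$ to the ordinary factorization counts $f_i$, $g_i$.
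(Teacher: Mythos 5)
Your proposal is correct and follows essentially the same route as the paper: evaluate \eqref{eq:Fkn} and \eqref{eq:Gkn} at the primorial $P_n$ (where every prime exponent equals $1$, so the product of $\nu_\beta(e_j)$ collapses to $\beta_1^n$), then translate $F_k(P_n)$ and $G_k(P_n)$ via $F_k(n)=\sum_{i=1}^k f_i(n)$, \eqref{eq:Ggkn}, and \eqref{eq:fkPn}. The only cosmetic difference is that the paper obtains \eqref{eq:cor13} from \eqref{eq:cor11} and $B_n=\sum_{i=1}^n\stirling{n}{i}$ rather than as the $k=n$ case of \eqref{eq:cor12}, which is the same bookkeeping.
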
 

\begin{proof}
	Equation \eqref{eq:cor12} follows from \eqref{eq:Fkn}, evaluated at $P_n$, and \eqref{eq:fkPn}.
	Equation \eqref{eq:cor13} follows from \eqref{eq:cor11} and 
	$B_n= \sum_{i=1}^n \stirling{n}{i}$.
	
	Equation \eqref{eq:cor14} follows from \eqref{eq:Gkn}, evaluated at $P_n$, \eqref{eq:fkPn} and \eqref{eq:Ggkn}.
	Equation \eqref{eq:cor15} follows from \eqref{eq:cor14} with $k=n$.
\end{proof} 

We conclude this section by analyzing the relationship between our formula \eqref{eq:Fkn} for $F_k(n)$ and a similar formula of Fedorov \cite[Equation 4]{Fed18} which involves compositions (or \textit{ordered partitions}) of $k$. For a positive integer $k$ and a vector of positive integers $\beta$ with $\sum i \beta_i =k$, we denote the set of all compositions $\alpha$ of $k$ by $\mathcal{C}_k$ and the (sub-) set of compositions of $k$ with the property $\#\{\alpha_j=i\}=\beta_i$ by $\mathcal{C}_{k,\beta}$. Fedorov found the equation

\begin{align}
\label{eq:fedFkn}
F_k(n) &=  \sum_{\alpha \in \mathcal{C}_k} H(\alpha) \mu_\beta(n) 
\end{align}
with
\begin{align}
\label{eq:Halpha}
H(\alpha) &:=  \left(
\alpha_1 (\alpha_1+\alpha_2) \cdots (\alpha_1+\cdots+\alpha_{l}) 
\right)^{-1} 
\text{.}
\end{align}

Since 
$F_k(n) =  \sum_{\alpha \in \mathcal{C}_k} H(\alpha) \mu_\beta(n) = 
\sum_{\alpha \in \mathcal{P}_k} \mu_\beta(n) 
\sum_{\alpha \in \mathcal{C}_{k,\beta}} H(\alpha)$, 
it is straightforward to ask, whether it is possible to deduce \eqref{eq:Fkn} from \eqref{eq:fedFkn} by aggregating all compositions $\alpha \in \mathcal{C}_{k,\beta}$. The following lemma gives a positive answer to that question.

\begin{lemma}
	\label{lm:hmc}
	Let $k \geq 1$ and $\beta$ a vector of length $a$ with 
	$\sum_{i=1}^a i \beta_i =k$. Then
	\begin{align}
	\label{eq:hmc}
	\sum_{\alpha \in \mathcal{C}_{k,\beta} } 
	H(\alpha) &= h(\beta)	
	\text{.}		
	\end{align}
\end{lemma}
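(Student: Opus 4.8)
The plan is to prove \eqref{eq:hmc} by establishing both sides satisfy the same generating-function identity, namely the one implicit in Theorem \ref{tm:FkA} and in Fedorov's formula. Concretely, I would argue that $\sum_{\alpha \in \mathcal C_k} H(\alpha)\prod_i X_i^{\beta_i}$ and $\sum_{\alpha \in \mathcal P_k} h(\beta)\prod_i X_i^{\beta_i}$ are both equal to the degree-$k$ part of $\exp\left(\sum_{i \ge 1} \tfrac{1}{i} X_i\right)$ (formally, treating the $X_i$ as independent indeterminates and reading off the coefficient of the monomial $\prod X_i^{\beta_i}$ after grading by $k=\sum i\beta_i$). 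For the partition side this is immediate from the exponential formula \eqref{eq:ExpFor} with $c_i = \tfrac1i X_i$: the coefficient of $\prod X_i^{\beta_i}$ is $\prod_i \tfrac{(1/i)^{\beta_i}}{\beta_i!} = h(\beta)$. So the whole content of the lemma is showing the composition side produces the same exponential generating function, and then comparing coefficients of the monomial $\prod_i X_i^{\beta_i}$ gives \eqref{eq:hmc} directly.

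For the composition side, I would proceed by induction on $k$ (equivalently on the number of parts $l$). Write a composition $\alpha = (\alpha_1, \dots, \alpha_l)$ of $k$ and peel off the \emph{last} part $\alpha_l = j$; then $H(\alpha) = H(\alpha')\cdot \tfrac{1}{k}$ where $\alpha' = (\alpha_1,\dots,\alpha_{l-1})$ is a composition of $k-j$, using that the final factor in \eqref{eq:Halpha} is exactly $(\alpha_1 + \cdots + \alpha_l)^{-1} = k^{-1}$. Hence
\begin{align*}
\sum_{\alpha \in \mathcal C_k} H(\alpha)\prod_i X_i^{\beta_i(\alpha)}
= \frac1k \sum_{j=1}^{k} X_j \sum_{\alpha' \in \mathcal C_{k-j}} H(\alpha')\prod_i X_i^{\beta_i(\alpha')}.
\end{align*}
If I set $\Phi(k) := \sum_{\alpha \in \mathcal C_k} H(\alpha)\prod_i X_i^{\beta_i(\alpha)}$ (with $\Phi(0)=1$), this recursion reads $k\,\Phi(k) = \sum_{j=1}^k X_j\,\Phi(k-j)$, which is precisely the differential-equation recursion satisfied by the coefficients of $\Psi(x) := \exp\!\left(\sum_{i\ge1}\tfrac{X_i}{i}x^i\right)$, since $x\Psi'(x) = \left(\sum_i X_i x^i\right)\Psi(x)$. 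Matching initial conditions $\Phi(0)=1$, the two sequences agree for all $k$, giving $\Phi(k) = \sum_{\alpha\in\mathcal P_k} h(\beta)\prod_i X_i^{\beta_i}$. Extracting the coefficient of $\prod_i X_i^{\beta_i}$ on both sides yields $\sum_{\alpha\in\mathcal C_{k,\beta}} H(\alpha) = h(\beta)$.

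There is essentially no serious obstacle here; the argument is a clean bijection-free generating-function identity. The one point requiring a little care is the bookkeeping of which part to remove: removing the \emph{last} part is what makes the telescoping factor in $H(\alpha)$ come out as $1/k$, whereas removing the first part would not give a clean recursion (the remaining partial sums all shift). A purely combinatorial alternative, which I would mention as a remark, is to sum over the position of the part equal to $a$ (the largest part) or to use the known fact that $\sum_{\alpha\in\mathcal C_{k,\beta}} H(\alpha)$ counts, with weight, linear extensions or parking-type configurations — but the generating-function route above is the shortest and self-contained given \eqref{eq:ExpFor}, so that is the one I would write up.
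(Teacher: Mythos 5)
Your proof is correct. The key step---stripping the last part $\alpha_l=j$ of a composition so that the final factor $(\alpha_1+\cdots+\alpha_l)^{-1}=1/k$ comes out of $H(\alpha)$---is exactly the decomposition the paper uses. Where you differ is in how the recursion is closed: the paper keeps the multiplicity vector $\beta$ explicit throughout, applies the induction hypothesis to the refined sums over $\mathcal{C}_{k-j,\beta^{(j)}}$, and finishes with the elementary identities $h(\beta^{(j)})=j\beta_j\,h(\beta)$ and $\sum_j j\beta_j=k$; you instead aggregate over all $\beta$ with formal markers $X_i$, observe that the resulting recursion $k\,\Phi(k)=\sum_{j}X_j\Phi(k-j)$ is precisely the coefficient recursion of $\exp\bigl(\sum_i X_i x^i/i\bigr)$, and then invoke the exponential formula \eqref{eq:ExpFor} to identify the coefficients as $h(\beta)$. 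Your route is slightly longer but has the merit of making transparent \emph{why} $h(\beta)$ appears (it is the same exponential-formula coefficient that drives the proof of Theorem \ref{tm:FkA}), whereas the paper's version is more elementary and avoids formal indeterminates. One small point to make explicit in a write-up: the final coefficient comparison requires that the monomials $\prod_i X_i^{\beta_i}$ for distinct multiplicity vectors $\beta$ be distinct, which holds because the $X_i$ are independent indeterminates; this deserves a half-sentence but is not a gap.
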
 
\begin{proof}
	We give a proof by induction on $k$. If $k=1$, lhs and rhs of \eqref{eq:hmc} both equal $1$. 
	
	Let $k \geq 2$. We denote by $\beta^{(j)}$ the vector $(\beta_i-\delta_{i,j})_{i=1,\dots,a}$
	, where the $j$-th element of $\beta$ is reduced by $1$. By extracting the factor $1/k$ in $H(\alpha)$, using that $\alpha_l=j$ for some $j \leq a$ (with $\beta_j \geq 1$), the induction hypothesis and $\sum_j j \beta_j = k$, we get
	\begin{align*}
	\sum_{\alpha \in \mathcal{C}_{k,\beta} }  	H(\alpha) 
	&= \frac{1}{k} 
	\sum_{\substack{ j=1 \\ \beta_j \geq 1} }^a	
	\sum_{\alpha \in \mathcal{C}_{k-j,\beta^{(j)}} } 	
	\left(
	\alpha_1 (\alpha_1+\alpha_2) \cdots (\alpha_1+\cdots+\alpha_{l-1})  \right)^{-1} \\
	&= \frac{1}{k} 
	\sum_{\substack{ j=1 \\ \beta_j \geq 1} }^a	
	h(\beta^{(j)}) \\
	&= \frac{1}{k} 
	\sum_{\substack{ j=1 \\ \beta_j \geq 1} }^a	
	\left(\prod_{i=1}^a i^{\beta_i - \delta_{i,j}} 
	(\beta_i- \delta_{i,j})! \right)^{-1} \\
	&= \frac{1}{k} \sum_{j=1}^a j \beta_j h(\beta)  \\
	&= h(\beta)
	\text{.}
	\end{align*}
	This completes the proof.
\end{proof}

Notice that the number of addends on the lhs of \eqref{eq:hmc} is $\tfrac{(\sum \beta_i)!}{\prod \beta_i!}$. Therefore Lemma \ref{lm:hmc} can also be stated as follows: \textit{For a given $\beta$, the harmonic mean of the product of the partial sums of all compositions 
	$\alpha \in \mathcal{C}_{k,\beta}$ 
	is given by 
	$(\sum \beta_i)! \prod i^{\beta_i}$.}

Fedorov's approach in \cite{Fed18} does not involve the dgf of $F_k(n)$ nor the exponential formula.\footnote{More precisely, Fedorov's starting point is a recursion for $F_k(n)$ similar to \eqref{eq:fkn1}, which can be proved by combinatorial arguments.} 
Therefore his approach together with Lemmata \ref{lm:munu} and \ref{lm:hmc} constitutes an alternative proof of equation \eqref{eq:Fkn}.

\section{Recursive formulas}
\label{sec:recfor}
We begin this section by deriving recursive formulas for the functions 
$f_{k,l}(n)$ and $h_l(n)$. The proof of Theorem \ref{tm:fkln} uses the logarithmic derivatives of the dgf's. This approach was used by Subbarao \cite[Section 4.2]{Sub04}, but the Equation (4.19) derived therein for $f_{k,l}(n)$ contains a slight error, which we correct here.

\begin{theorem}
	\label{tm:fkln}
	Let $n \geq 2$, $k \geq 1$ and $l \geq 1$. Then 
	\begin{align}
	\label{eq:fkln}
	f_{k,l}(n) \log n &= 
	\sum_{\substack{ d^i \lvert n \\ d \geq 2}} 
	\sum_{j=1}^{l} 
	(-1)^{j+1} \binom{i}{j} f_{k-i,l-j}(n/d^i) \log d  \\
	\label{eq:hln}
	h_{l}(n) \log n &= 
	\sum_{\substack{ d^i \lvert n \\ d \geq 2}} 
	\sum_{j=1}^{l} (-1)^{j+1} 
	\binom{i}{j} h_{l-j}(n/d^i)  \log d
	\text{,} 
	\end{align}	
	with boundary conditions	
	\begin{align*}
	f_{k,l}(n) &= 
	\begin{cases}
	1, & \text{if $n=1$ and $k=l=0$;}\\
	0, & \text{otherwise.}
	\end{cases} 	\\	
	h_{l}(n) &= 
	\begin{cases}
	1, & \text{if $n=1$ and $l=0$;}\\
	0, & \text{otherwise.}
	\end{cases} 		
	\text{.}
	\end{align*}
\end{theorem}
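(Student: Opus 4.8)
The plan is to read the recursion directly off the Dirichlet generating functions \eqref{eq:fkldgf} and \eqref{eq:hldgf} by taking their logarithmic derivative in $s$, which is exactly the method of Subbarao \cite[Section 4.2]{Sub04}; the only genuine work — and, I expect, the precise spot where Subbarao's Equation (4.19) went astray — is the bookkeeping of the binomial coefficients and signs thrown off by the expansion, so that is the step I would carry out most carefully.

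First I would rewrite the local factor of \eqref{eq:fkldgf} as a genuine ratio,
\[
1 + \frac{z t\, n^{-s}}{1 - z n^{-s}} = \frac{1 - z(1-t)\,n^{-s}}{1 - z n^{-s}} \text{,}
\]
so that, abbreviating the left-hand side of \eqref{eq:fkldgf} by $\Phi(s)$, the logarithm of the product splits as $\log \Phi(s) = \sum_{n \geq 2}\bigl(\log(1 - z(1-t)n^{-s}) - \log(1 - z n^{-s})\bigr)$, valid wherever everything converges absolutely (say $\Re(s)$ large, $|z|$ and $|t|$ small). Differentiating termwise in $s$ and expanding each $\tfrac{w}{1-w}$ as $\sum_{i \geq 1} w^i$ gives
\[
\frac{\partial}{\partial s} \log \Phi(s) = \sum_{n \geq 2} \log n \sum_{i \geq 1} \bigl((1-t)^i - 1\bigr) z^i n^{-is} \text{,}
\]
and here I would apply the binomial theorem in the form $(1-t)^i - 1 = -\sum_{j=1}^{i} (-1)^{j+1}\binom{i}{j} t^j$. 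Since $\partial_s \Phi(s) = -\sum_{n\geq 2}\sum_{k,l} f_{k,l}(n)\log n\, n^{-s} z^k t^l$ and $\partial_s \Phi = \Phi \cdot \partial_s \log \Phi$, multiplying through by $\Phi(s)$ turns this into an identity of (formal) Dirichlet series in which the coefficient of $n^{-s} z^k t^l$ on the left is $f_{k,l}(n)\log n$.

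It then remains to read off the coefficient of $n^{-s} z^k t^l$ on the right: a contribution comes from a choice of $d \geq 2$ and $i \geq 1$ with $d^i \mid n$, the factor $(-1)^{j+1}\binom{i}{j}\log d$ from the differentiated logarithm, and the factor $f_{k-i,l-j}(n/d^i)$ from $\Phi(s)$ — with the constant term $1$ of $\Phi(s)$ producing precisely the case $n/d^i = 1$, which is absorbed into the stated boundary value $f_{k,l}(1) = [\,k=l=0\,]$. Because $\binom{i}{j} = 0$ for $j > i$, the inner sum may be written as $\sum_{j=1}^{l}$, and one obtains \eqref{eq:fkln} exactly. For \eqref{eq:hln} I would then simply sum \eqref{eq:fkln} over all $k \geq 0$, using $\sum_k f_{k,l}(m) = h_l(m)$ for $m \geq 2$ together with the boundary conventions at $m=1$; alternatively, one repeats the same computation starting from \eqref{eq:hldgf}, where $1 + \tfrac{t}{n^s - 1} = \tfrac{n^s - 1 + t}{n^s - 1}$ has logarithmic $s$-derivative $\sum_{n \geq 2} \log n \sum_{i \geq 1}\bigl((1-t)^i - 1\bigr) n^{-is}$ — the earlier expression with $z = 1$ — and extracting the coefficient of $n^{-s} t^l$ yields \eqref{eq:hln} directly. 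The only delicate point anywhere in this argument is getting the expansion of $(1-t)^i - 1$ right; everything else is routine coefficient comparison.
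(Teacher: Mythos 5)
Your proposal is correct and follows essentially the same route as the paper: rewrite the local factor of \eqref{eq:fkldgf} as $\frac{1-z(1-t)n^{-s}}{1-zn^{-s}}$, take the logarithmic derivative in $s$, expand $(1-t)^i-1$ by the binomial theorem, and compare coefficients after multiplying back by the generating function (the paper obtains \eqref{eq:hln} by specializing $z=1$ in the dgf, which is equivalent to your summation over $k$). Your sign bookkeeping in $\partial_s n^{-s}=-\log n\, n^{-s}$ is in fact handled more explicitly than in the paper's own write-up.
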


\begin{proof}
	The dgf of $f_{k,l}(n)$ is given by
	\begin{align*}
	\mathcal{K}(s,z,t) &:= 
	\sum_{n=2}^\infty \sum_{k=1}^\infty \sum_{l=1}^k
	f_{k,l}(n) n^{-s} z^k t^l  \\
	&= \prod_{n=2}^\infty 
	\left( 1 + \frac{n^{-s} z t}{1-n^{-s} z } \right) \\
	&= \prod_{n=2}^\infty 
	\frac{1 - n^{-s} z (1-t)}{1-n^{-s} z }  
	\text{.}
	\end{align*}
	We calculate the logarithmic derivative of $\mathcal{K}(s,z,t)$ with respect to $s$. Taking logarithms of the rhs, we get
	\begin{align*}
	\log \mathcal{K}(s,z,t) &= 
	\sum_{n=2}^\infty \log \left(  1 - n^{-s} z (1-t)) \right) -
	\sum_{n=2}^\infty \log \left( 1 - n^{-s} z \right) \\
	&= A(1-t,n,s,z) - A(1,n,s,z) \text{,}
	\end{align*}	
	with 
	\begin{align*}
	A(t,n,s,z) &:= 
	\sum_{n=2}^\infty \log \left(  1 - n^{-s} z t) \right) \\
	&= - \sum_{n=2}^\infty \sum_{m=1}^\infty \tfrac{1}{m} 
	n^{-ms} z^m t^m
	\text{.}
	\end{align*}	
	Taking derivative with respect to $s$, we get
	\begin{align*}
	\tfrac{\partial}{\partial s} A(t,n,s,z) &= 
	\sum_{n=2}^\infty \sum_{m=1}^\infty 
	n^{-ms} z^m t^m \log n 
	\text{,}
	\end{align*}
	so that we get, after expanding $(1-t)^m$
	\begin{align*}
	\tfrac{\partial}{\partial s} \log \mathcal{K}(s,z,t)
	&= -\tfrac{\partial}{\partial s} A(1,n,s,z) 
	+\tfrac{\partial}{\partial s} A(1-t,n,s,z) \\
	&= 	\sum_{n=2}^\infty \sum_{m=1}^\infty  
	n^{-ms} z^m  \log n \left( (-1) + 
	\sum_{i=0}^\infty \binom{m}{i} (-1)^i t^i  \right) \\
	&= 	\sum_{n=2}^\infty \log n 
	\sum_{m=1}^\infty n^{-ms} z^m  
	\sum_{i=1}^\infty \binom{m}{i} (-1)^i t^i
	\text{.}
	\end{align*}
	By the definition of $\mathcal{K}(s,z,t)$, we also have
	\begin{align*}
	\tfrac{\partial}{\partial s} \log \mathcal{K}(s,z,t)
	&= \frac{\tfrac{\partial}{\partial s} 
		\mathcal{K}(s,z,t) } 
	{\mathcal{K}(s,z,t)}  \\
	&= \frac{1}{\mathcal{K}(s,z,t)}
	\sum_{n=2}^\infty \sum_{k=1}^\infty \sum_{l=1}^k
	\log n f_{k,l}(n) n^{-s} z^k t^l  
	\text{.}
	\end{align*}
	The recursion \eqref{eq:fkln} now follows by equating the two expressions of the logarithmic derivative of $\mathcal{K}(s,z,t)$, multiplying with $\mathcal{K}(s,z,t)$ and extracting coefficients.
	
	The recursion \eqref{eq:hln} follows by noticing that 
	$\mathcal{H}(s,t)=\mathcal{K}(s,1,t)$, where
	$\mathcal{H}(s,t)$
	denotes the dgf of $h_l(n)$, see equation \eqref{eq:hldgf} above.
\end{proof}

For a prime $\pi$, we denote by $\kappa_{\pi}(n)$ the maximal power of $\pi$ that divides $n$, i.e.\ the number $m$ with $\pi^m \lvert n$ and $\pi^{m+1} \nmid n$. By a standard argument as in Chamberland et al. \cite[Theorem 2]{Cha13}, Equations \eqref{eq:fkln} and \eqref{eq:hln} can be slightly simplified by replacing the term $\log n$ by $\kappa_{\pi}(n)$ on the lhs's and the term $\log d$ by $\kappa_{\pi}(d)$ on the rhs's of the equations.

Analogous recurrence relations can be derived with the same approach as in the above theorem for $f_k(n)$ and $g_k(n)$ as
\begin{align*}
f_k(n) \kappa_{\pi}(n) &=  
\sum_{\substack{ d^i \lvert n \\ d \geq 2  }}
f_{k-i}(n/d^i) \kappa_{\pi}(d) \\
g_k(n) \kappa_{\pi}(n) &=     
\sum_{\substack{ d^i \lvert n \\ d \geq 2  }}
(-1)^{i+1} g_{k-i}(n/d^i) \kappa_{\pi}(d)
\text{.}
\end{align*}	
These equations are similar to \eqref{eq:fkn1} and \eqref{eq:gkn1}. They can be used to derive recursions for $f(n)$ and $g(n)$ by summing up over $k$. 

Evaluating Equations \eqref{eq:fkln} and \eqref{eq:hln} at prime exponents $\pi^n$, we immediately get the following recursions for $p_{k,l}(n)$, the number of partitions of $n$ with exactly $k$ parts, where exactly $l$ parts are different and $r_l(n)$, the number of partitions of $n$ with exactly $k$ different parts.
\begin{corollary}
	\label{co:pkln}
	Let $n \geq 2$, $k \geq 1$ and $l \geq 1$. Then
	\begin{align}
	n p_{k,l}(n) &=
	\sum_{d=1}^{n} d \sum_{j=1}^{l} (-1)^{j+1} 
	\sum_{i=1}^{\floor{n/d}} \binom{i}{j} p_{k-i,l-j}(n-id)    \\
	\label{eq:rln}
	n r_l(n) &=
	\sum_{d=1}^{n} d   \sum_{j=1}^{l}  (-1)^{j+1} 
	\sum_{i=1}^{\floor{n/d}}   \binom{i}{j} r_{l-j}(n-id)   
	\text{,}
	\end{align}
	with the boundary conditions	
	\begin{align*}
	p_{k,l}(n) &= 
	\begin{cases}
	1, & \text{if $n=k=l=0$;} \\
	0, & \text{otherwise,}
	\end{cases} \\		
	r_{l}(n) &= 
	\begin{cases}
	1, & \text{if $n=l=0$;} \\
	0, & \text{otherwise.}
	\end{cases} 	
	\end{align*}			
\end{corollary}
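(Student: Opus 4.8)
The plan is to obtain both recursions by specialising Theorem~\ref{tm:fkln} to arguments of the form $n=\pi^{N}$ for a fixed prime $\pi$, working with the $\kappa_{\pi}$-variant of \eqref{eq:fkln} and \eqref{eq:hln} pointed out in the remark following that theorem, so that the additive weights $\log n$ and $\log d$ become the integer exponents $\kappa_{\pi}(\pi^{N})=N$ and $\kappa_{\pi}(\pi^{e})=e$. After that the statement is just a bookkeeping translation of factorizations of a prime power into partitions, so essentially no new idea is needed.

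First I would record the dictionary. For a fixed prime $\pi$ and an integer $N\ge 0$, the map sending a factor $\pi^{a}$ (with $a\ge 1$) to the part $a$ is a bijection between factorizations of $\pi^{N}$ with parts $\ge 2$ and partitions of $N$ with parts $\ge 1$; it preserves the number of parts, sends equal factors to equal parts and distinct factors to distinct parts, hence also preserves the number of different part sizes. Consequently $f_{k,l}(\pi^{N})=p_{k,l}(N)$ and $h_{l}(\pi^{N})=r_{l}(N)$ for all admissible $k,l,N$ (this is of course just an instance of the prime-independence already used in Section~1), and the degenerate case $N=0$, corresponding to the empty factorization/partition, matches the stated boundary conventions for $p_{k,l}$ and $r_{l}$.

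Next I would substitute $n=\pi^{N}$ into $f_{k,l}(n)\,\kappa_{\pi}(n)=\sum_{d^{i}\mid n,\;d\ge 2}\sum_{j=1}^{l}(-1)^{j+1}\binom{i}{j}f_{k-i,l-j}(n/d^{i})\,\kappa_{\pi}(d)$. A divisor $d\ge 2$ with $d^{i}\mid\pi^{N}$ is necessarily $d=\pi^{e}$ with $1\le e\le N$, the condition $d^{i}\mid\pi^{N}$ reads $ie\le N$, i.e. $1\le i\le\floor{N/e}$, and then $n/d^{i}=\pi^{N-ie}$. Feeding in the dictionary turns the identity into $N\,p_{k,l}(N)=\sum_{e=1}^{N}e\sum_{j=1}^{l}(-1)^{j+1}\sum_{i=1}^{\floor{N/e}}\binom{i}{j}p_{k-i,l-j}(N-ie)$, and renaming $e\mapsto d$, $N\mapsto n$ gives the first displayed formula of the Corollary; the second, \eqref{eq:rln}, follows in exactly the same way from \eqref{eq:hln}.

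The argument has no genuine obstacle; the only points requiring care are (i) the re-indexing of the divisor sum, namely recognising that ``$d^{i}\mid\pi^{N}$ with $d\ge 2$'' forces $d$ to be a power of $\pi$ and imposes the range $i\le\floor{N/e}$, and (ii) checking that the bijection respects the statistic ``number of different parts'' in both the $f_{k,l}$ and the $h_{l}$ versions. I would also verify that the degenerate summands (those with $j>i$, where $\binom{i}{j}=0$, and those with $N-ie=0$) are handled consistently with the boundary conditions, so that the recursions are valid as stated for all $n\ge 2$.
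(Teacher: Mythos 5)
Your proposal is correct and follows essentially the same route as the paper: the paper's proof is precisely the observation that $p_{k,l}(n)=f_{k,l}(\pi^n)$ and $r_l(n)=h_l(\pi^n)$, followed by evaluating Equations \eqref{eq:fkln} and \eqref{eq:hln} at $n=\pi^N$ and re-indexing the divisor sum exactly as you describe. Your use of the $\kappa_\pi$-variant rather than dividing through by $\log\pi$ is an immaterial difference.
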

\begin{proof}
	For any prime $\pi$, we have $p_{k,l}(n)=f_{k,l}(\pi^n)$ and $r_{l}(n)=h_{l}(\pi^n)$.
\end{proof}

Another recursion for $r_l(n)$ can be derived by a standard combinatorial approach without using the generating function. We denote by $r_{l,j}(n)$ the number of partitions of $n$ with exactly $l$ parts, all parts being $\geq j$; it follows that $r_l(n)=r_{l,1}(n)$. 
\begin{theorem}
	Let $n \geq 1$, $l \geq 1$ and $j \geq 1$. Then
	\begin{align}
	\label{eq:rljn}
	r_{l,j}(n)  &= 
	r_{l,j+1}(n) +
	\sum_{i=1}^{\floor{n/j}}  r_{l-1,j+1}(n-ij)  
	\end{align}
	with boundary condition
	\begin{align*}
	r_{l,j}(n) &= 
	\begin{cases}
	0, & \text{if $n<0$ or $l<0$ or $jl>n$;} \\
	0, & \text{if $n=0$ and $l \geq 1$;} \\
	1, & \text{if $n=l=0$.}
	\end{cases} 		
	\end{align*}			
\end{theorem}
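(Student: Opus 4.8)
The plan is to prove \eqref{eq:rljn} by a direct combinatorial decomposition of the set of partitions being counted, rather than through generating functions as in the proof of Theorem~\ref{tm:fkln}. Recall that $r_{l,j}(n)$ counts the partitions of $n$ having exactly $l$ distinct part sizes, all of which are $\geq j$, so that indeed $r_l(n)=r_{l,1}(n)$. I would fix $n\geq 1$, $l\geq 1$, $j\geq 1$, let $\mathcal{R}$ be this set of partitions of $n$, and split $\mathcal{R}$ according to whether the value $j$ occurs as a part of a partition or not.

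For the partitions in $\mathcal{R}$ in which $j$ does not occur, every part is $\geq j+1$ while the number of distinct part sizes is still exactly $l$; conversely, any partition of $n$ with exactly $l$ distinct part sizes all $\geq j+1$ lies in $\mathcal{R}$ and avoids $j$. This subset is therefore in bijection with the partitions counted by $r_{l,j+1}(n)$, giving the first summand of \eqref{eq:rljn}.

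For the partitions $\lambda\in\mathcal{R}$ in which $j$ occurs with some multiplicity $i\geq 1$, the $i$ copies of $j$ contribute $ij\leq n$, so $1\leq i\leq\floor{n/j}$. I would then check that deleting all $i$ copies of $j$ from $\lambda$ is a bijection from $\{\lambda\in\mathcal{R}:\ j\text{ has multiplicity }i\text{ in }\lambda\}$ onto the set of partitions of $n-ij$ with exactly $l-1$ distinct part sizes, all $\geq j+1$: the deleted value $j$ is strictly below every surviving part size, so the surviving sizes are the remaining $l-1$ distinct values of $\lambda$, each $\geq j+1$, and the inverse re-adjoins $i$ copies of $j$, which introduces a new smallest part size and restores the distinct-size count to $l$. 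Summing over $i$ gives the second summand, and adding the two contributions proves \eqref{eq:rljn}; since the argument merely partitions the finite set $\mathcal{R}$, no induction is required here.

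Finally I would record the boundary values, which are immediate from the combinatorial definition: there are no partitions of a negative integer or with a negative number of distinct parts; $l$ distinct parts each $\geq j$ force a total of at least $jl+\binom{l}{2}\geq jl$, so the count vanishes when $jl>n$; a positive integer has no partition using zero distinct part sizes; and the empty partition accounts for $r_{0,j}(0)=1$. I would also note that iterating \eqref{eq:rljn} strictly raises the index $j$, so that once $jl>n$ the value is $0$; hence the recursion together with the boundary conditions terminates and correctly evaluates $r_{l,j}(n)$. I do not expect a genuine obstacle for this statement: the only point needing care is the completeness of the case split and the bookkeeping of these degenerate arguments, the combinatorial core being entirely routine.
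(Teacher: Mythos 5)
Your proof is correct and follows essentially the same route as the paper's: a direct combinatorial split of the partitions counted by $r_{l,j}(n)$ according to the multiplicity $i$ of the part $j$, with the $i=0$ case giving $r_{l,j+1}(n)$ and each $i\geq 1$ case giving $r_{l-1,j+1}(n-ij)$ via deletion of the copies of $j$. You simply spell out the bijection and the boundary values in more detail than the paper does.
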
	
\begin{proof}
	We call a partition of $n$ with $l$ different parts $\geq j$ an $(n,l,j)$-partition. Let $0 \leq i \leq \floor{n/j}$ be the number of parts equal to $j$ in such a partition. The number of $(n,l,j)$-partitions with no part equal to $j$ is $r_{l,j+1}(n)$. For $i \geq 1$, the number of $(n,l,j)$-partitions with exactly $i$ parts equal to $j$ is 
	$r_{l-1,j+1}(n-ij)$. 
	This proves the theorem.
\end{proof}

Other recursive equations for the function $r_l(n)$ are known. For example, Merca \cite[Corollary 1.2]{Mer16} gave a recursion based on $q_k(n)$, the number of distinct $k$-partitions; another recursion, involving the auxiliary function 
$a_{i,j}  = \sum_{d \lvert j} \binom{d-1}{i-1}$,
was also found by Merca \cite[Theorem 1.1]{Mer17}.

Formula \eqref{eq:rln} seems to be the first recurrence relation for $r_l(n)$ which avoids other auxiliary functions. Formula \eqref{eq:rljn} seems to be computationally more efficient than the other variants mentioned.

\bibliographystyle{plain}

\bigskip
\hrule
\bigskip

\noindent 2010 {\it Mathematics Subject Classification}: Primary 11A51;
Secondary 05A17.

\noindent \emph{Keywords:}
number of unordered factorizations with $k$ parts, multiplicative partition, sum over partition, Bell number, Stirling number of the second kind.

\bigskip
\hrule
\bigskip
		
\end{document}